\documentclass[reqno,12pt]{amsart}
\usepackage{amsmath,amsthm,amsfonts,amscd}
\usepackage{eepic}
\usepackage{xy}
\usepackage{graphicx}
\usepackage{verbatim}
\usepackage{times}

\newtheorem{prop}{Proposition}[subsection]


\newtheorem{defn}[prop]{Definition}

\newtheorem{claim}{Claim}


\newtheorem{theorem}[subsubsection]{Theorem}

\newtheorem{assump}[subsubsection]{Assumption}

\newtheorem{proposition}[subsubsection]{Proposition}
\theoremstyle{definition}

\newtheorem{definition}[subsubsection]{Definition}

\theoremstyle{remark}

\theoremstyle{remark}

\newtheorem{remark}[subsubsection]{Remark}

\numberwithin{equation}{section}

\newcommand{\X}{\mathcal{X}}
\newcommand{\Y}{\mathcal{Y}}

\newcommand{\M}{\mathcal{M}}

\newcommand{\F}{\mathcal{F}}
\newcommand{\V}{\mathcal{V}}
\newcommand{\U}{\mathcal{U}}
\newcommand{\G}{\mathbb{G}}

\newcommand{\sI}{\mathcal{I}}

\newcommand{\sO}{\mathcal{O}}

\renewcommand{\O}{\mathcal{O}}

\newcommand{\XM}{\mathcal{X}\times \M_\L}
\newcommand{\T}{\overline{T}}
\newcommand{\g}{\overline{g}}
\newcommand{\f}{\overline{f}}

\newcommand{\XT}{\X \times T}
\newcommand{\gen}{\mathcal{E}}
\newcommand{\E}{\mathbb{E}}
\newcommand{\LM}{\mathbb{L}_{\M_\L}}
\newcommand{\LT}{\mathbb{L}_{T}}
\newcommand{\LXM}{\mathbb{L_{\X \times \M_\L}}}
\newcommand{\LXT}{\mathbb{L}_{\X \times T}}
\newcommand{\OT}{\otimes}

\newcommand{\RHom}{R\mathcal{H}om}
\newcommand{\Ext}{\operatorname{Ext}}
\newcommand{\RP}{R\pi_{*}}

\renewcommand{\c}{\operatorname{c}}
\renewcommand{\L}{\mathcal{L}}
\newcommand{\DT}{\operatorname{DT}}

\def\<{\left\langle}
\def\>{\right\rangle}

\pagestyle{headings}
\begin{document}

\title{On Donaldson-Thomas invariants of threefold stacks and gerbes}
\author{Amin Gholampour}
\address{Department of Mathematics\\ California Institute of Technology\\ Pasadena\\ CA 91125\\USA}
\email{agholamp@caltech.edu}
\author{Hsian-Hua Tseng}
\address{Department of Mathematics\\ Ohio State University\\ 100 Math Tower, 231 West 18th Ave.\\Columbus\\ OH 43210\\ USA}
\email{hhtseng@math.ohio-state.edu}

\date{\today}

\begin{abstract}
We present a construction of Donaldson-Thomas invariants for three-dimensional projective Calabi-Yau Deligne-Mumford stacks.  We also study the structure of these invariants for \'etale gerbes over such stacks.
\end{abstract}

\maketitle
\section{Introduction}\label{sec:intro}
We work over the field of complex numbers throughout the paper. Let $\X$ be a smooth proper Deligne-Mumford (DM) stack with projective coarse moduli space $X$. Gromov-Witten (GW) theory, which roughly speaking concerns integrations against virtual fundamental classes of moduli spaces of twisted stable maps, is by now well-established \cite{cr}, \cite{agv1}, \cite{agv2}, and has been an area of active research recently.

For $3$-dimensional smooth projective varieties the so-called Donaldson-Thomas (DT) theory is constructed in \cite{Thomas-Casson}. An important special case is when DT theory concerns integration against virtual fundamental classes of the moduli spaces of torsion free, rank $1$ sheaves with trivial determinants. It has been conjectured \cite{mnop1, mnop2}, and proven in some cases, that this gives an equivalent theory to the GW theory of the ambient 3-fold.

The first goal of this paper is to extend the construction of DT invariants to DM stacks. This is done in Section \ref{section:DT-inv_construction}. Our construction is parallel to that of \cite{Thomas-Casson}, and uses the moduli spaces of stable sheaves on a smooth projective DM stack $\X$, recently constructed by Nironi \cite{Nironi-Sheaves}. More precisely, we show that moduli spaces of stable sheaves on a $3$-dimensional DM stack $\X$ with trivial canonical bundle (i.e. Calabi-Yau) admit natural perfect obstruction theories, which we use to define invariants in case there are no strictly semistable sheaves.

For a given torsion element $[c] \in H^2_{et}(\X,\G_m)$ represented by a $2$-cocycle $c$, we will use the moduli space of stable $c$-twisted sheaves on $\X$ (see \cite{Caldararu-thesis,Lieblich,Nironi-Sheaves}). This is a connected component of the moduli space of stable sheaves on a $\G_m$-gerbe defined on $\X$ representing the element $[c]$ (see \cite[Appendix A]{Nironi-Sheaves}). The perfect obstruction theory on the moduli space of the stable sheaves (if exists) induces a perfect obstruction theory on the moduli space of stable $c$-twisted sheaves.

As an application of the construction in this paper, we study the DT invariants of a $G$-gerbe $\Y$ over a $3$-dimensional Calabi-Yau stack $\X$, where $G$ is a finite group. This is the content of Section \ref{section:DT-gerbe}. Our study is motivated by the physical conjecture in \cite{HHPS} which states that ``conformal field theories on the $G$-gerbe $\Y$ are equivalent to conformal field theories on a dual space $\widehat{\Y}$ {\em twisted by} a B-field $c$''. The construction of the dual space $\widehat{\Y}$ is explained in Definition \ref{defn:dual-space}. Mathematically, the B-field $c$ is a $\mathbb{G}_m$-valued cocycle on $\widehat{\Y}$. Its definition is explained in Section \ref{subsection:gerbe-basics}. The theme of Section \ref{section:DT-gerbe} is a comparison between DT invariants of the gerbe $\Y$ and DT invariants of the dual $(\widehat{\Y}, c)$. Our Proposition \ref{prop:DT-decomposition} can be interpreted as DT-theoretic version of the physics conjecture.

In the presence of strictly semistable sheaves, moduli spaces of stable sheaves are not proper. In this situation it is desirable to extend the construction of {\em generalized DT invariants} \cite{JS} to our setting. We plan to present this in a later revision.

\subsection*{Acknowledgment}
We thank A. Caldararu and M. Lieblich for discussion on twisted sheaves and gerbes, and X. Tang for helpful comments. H.-H. T. is supported in part by NSF grant DMS-0757722.

\section{DT invariants for DM-stacks}\label{section:DT-inv_construction}
\subsection{Review of Nironi's construction}
In this section we construct a perfect obstruction theory for the moduli space of stable sheaves over a special class of 3-dimensional DM stacks that is used in this paper. The construction is similar to the case of smooth varieties (see \cite{Thomas-Casson}). This will allow us to define DT invariants for such stacks in cases where the moduli schemes are projective.

Let $\X$ be a DM stack with a projective moduli scheme $X$. We denote by $\c:\X \to X$ the ``coarsening map'' (i.e. the natural map from the stack $\X$ to its coarse moduli scheme $X$). We further assume that $\X$ is equipped with a generating sheaf $\gen$ in the sense of 
\cite{Olsson-Starr, Nironi-Sheaves}. By definition $\gen$ is a locally free sheaf on $\X$ whose fiber over any geometric point of $x\in \X$ contains the regular representation of the stabilizer group at $x$. Throughout the paper we fix a choice of a generating sheaf $\gen$ of $\X$ and a polarization $\O_X(1)$ on $X$. Following \cite{Kresch} and \cite[Definition 2.20]{Nironi-Sheaves}, we call $\X$  {\em{projective}} if it satisfies these conditions\footnote{A DM stack $\X$ is projective if and only if it is a tame separated global quotient with a projective moduli scheme (see \cite{Kresch} and \cite[Theorem 2.21]{Nironi-Sheaves}).}.

In \cite{Nironi-Sheaves} Nironi constructs the moduli space of semistable coherent sheaves on projective smooth DM stacks. We review part of his construction briefly and refer the reader to \cite{Nironi-Sheaves} for details. The main difference with the case of coherent sheaves on schemes is that stability condition now depends on $\gen$ as well as $\O_X(1)$. More precisely, for a coherent sheaf $\F$ on $\X$ stability is defined with respect to the Hilbert polynomial $$P_{\F}(m):=\chi(\X,\F\otimes \gen^\vee \otimes \operatorname{c}^*\O_X(1)^{\otimes m}).$$  Let $p_{\F}$ be the monic polynomial obtained by dividing $P_{\F}$ by the coefficient of the leading term. $p_\F$ is the {\em reduced} Hilbert polynomial of $\F$. A pure sheaf $\F$ on $\X$ is called {\em semistable} if $p_{\F'}\le p_{\F}$ for any proper subsheaf $\F'\subset \F$. $\F$ is called {\em stable} if the inequality is always strict.

Let $P\in \mathbb{Q}[z]$, and let $$\M(\X,P)=\M(\X,\gen,\O_X(1),P)$$ be the moduli stack of stable torsion free sheaves $\F$ on $\X$ with $P_{\F}=P$. Nironi constructs $\M(\X,P)$ as a quotient stack $[Q/GL(N)]$ where $Q$ is an appropriate subscheme of a Quot scheme on $\X$ (see \cite{Olsson-Star}). He shows that $\M(\X,P)$ is a $\G_m$-gerbe over a quasi projective moduli scheme $M(\X,P)$. Moreover $M(\X,P)$ is shown to be a geometric quotient of $Q$, and GIT techniques provides a natural compactification of $M(\X,P)$, parameterizing the $S$-equivalence classes of semistable sheaves on $\X$ (see \cite[Theorems 6.20-23]{Nironi-Sheaves}).

\subsection{Obstruction theory and DT inariants}

Let $\L$ be a line bundle on $\X$, and denote by $$\M(\X,P,\L)\subset \M(\X,P)$$ the moduli substack of stable torsion free sheaves with fixed determinant $\L$. We denote by $M(\X,P,\L)$ the corresponding coarse moduli scheme. By the construction of Nironi and discussion above, $\M(\X,P,\L)$ is the fine moduli stack of DM type\footnote{In fact $\M(\X,P,\L)$ is a $\mu_r$-gerbe over $M(\X,P,\L)$ where $r$ is the rank of the objects parameterized by $\M(\X,P)$.}. By the following proposition there exist perfect obstruction theories on $\M(\X,P,\L)$ and $M(\X,P)$, in the sense of \cite{Be-Fa}:

\begin{prop} \label{prop:perfect obstruction theory}
Suppose $\X$ is a smooth projective DM stack of dimension 3 satisfying $\omega_{\X}\cong \O_{\X}$. Then there exist natural perfect obstruction theories on $\M(\X,P,\L)$ and $M(X,P)$. Moreover, these obstruction theories are symmetric in the sense of \cite{beh}.
\end{prop}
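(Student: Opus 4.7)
The strategy is a direct stacky adaptation of Thomas's construction on Calabi--Yau threefolds (cf.~\cite{Thomas-Casson, mnop1}): use the trace-free Atiyah class of a universal sheaf on $\X\times\M(\X,P,\L)$ and apply relative Serre duality along the smooth, proper, tame projection $\pi\colon \X\times\M(\X,P,\L)\to \M(\X,P,\L)$. Because $\M(\X,P,\L)$ is (at worst) a $\mu_r$-gerbe over its coarse moduli scheme, a universal sheaf $\gen^{\mathrm{u}}$ exists on $\X\times\M(\X,P,\L)$; in any event the traceless endomorphism complex $\RHom(\gen^{\mathrm{u}},\gen^{\mathrm{u}})_0$ is canonical and insensitive to scalar twisting. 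One forms the trace-free Atiyah class
$$\mathrm{At}(\gen^{\mathrm{u}})_0 \colon \gen^{\mathrm{u}} \longrightarrow \gen^{\mathrm{u}} \otimes \pi^{*}\mathbb{L}_{\M(\X,P,\L)}[1]$$
(legitimate because $\det\gen^{\mathrm{u}}\cong \L$ is rigid), applies $R\RHom(\gen^{\mathrm{u}},-)$ and $\RP$, and obtains a morphism
$$\phi \colon \RP\bigl(\RHom(\gen^{\mathrm{u}},\gen^{\mathrm{u}})_0\bigr)[1] \longrightarrow \mathbb{L}_{\M(\X,P,\L)}.$$

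Deformation theory of sheaves on tame DM-stacks (in the spirit of \cite{Nironi-Sheaves}) identifies the cohomology of the shifted source at a closed point $[F]$ with $\Ext^{1}(F,F)_{0}$ and $\Ext^{2}(F,F)_{0}$, and identifies $\phi$ pointwise with the Kodaira--Spencer/obstruction maps, thus verifying the ``morphism of obstruction theories'' axiom. For perfectness in tor-amplitude $[-1,0]$, stability yields $\Ext^{0}(F,F)_{0}=0$ and Serre duality combined with $\omega_{\X}\cong\sO_{\X}$ yields $\Ext^{3}(F,F)_{0}=0$; together with the perfectness of $\RP$ of a perfect complex along the smooth proper tame $\pi$ and the standard truncation argument of \cite{Thomas-Casson}, this produces a two-term resolution by locally free sheaves. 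Symmetry in the sense of \cite{beh} then follows immediately from the Calabi--Yau relative Serre duality
$$\RP\bigl(\RHom(\gen^{\mathrm{u}},\gen^{\mathrm{u}})_0\bigr)^{\vee}[-3] \;\simeq\; \RP\bigl(\RHom(\gen^{\mathrm{u}},\gen^{\mathrm{u}})_0\bigr),$$
which after the overall $[1]$-shift is exactly the required self-duality.

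To obtain the perfect obstruction theory on $M(\X,P)$ one runs the analogous construction with the full Atiyah class and $\RHom(\gen^{\mathrm{u}},\gen^{\mathrm{u}})$ in place of the traceless part, and then descends along the $\G_m$-gerbe $\M(\X,P)\to M(\X,P)$: the endomorphism complex is $\G_m$-invariant because the scalar action on $\gen^{\mathrm{u}}$ cancels, so both the obstruction complex and the map $\phi$ descend to the coarse moduli. The main obstacle I anticipate is not the shape of the argument---which is essentially Thomas's---but the package of stacky technical inputs it consumes: the Atiyah-class and cotangent-complex formalism on DM-stacks, perfectness of $\RP$ applied to perfect complexes along smooth proper tame $\pi$, relative Serre duality with trivial relative dualising sheaf, and descent along $\G_m$- and $\mu_r$-gerbes. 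Each ingredient is available in the literature, but their compatibility for the tame projective $3$-dimensional Calabi--Yau DM-stack $\X$ under consideration must be checked carefully.
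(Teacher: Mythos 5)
Your construction on $\M(\X,P,\L)$ is essentially the paper's: Atiyah class of the universal sheaf, projection $\LXM\to\pi^*\LM$, relative Grothendieck--Serre duality with $\omega_\pi\cong\O$, and perfect amplitude $[-1,0]$ from $\Ext^0_\X(\U_m,\U_m)_0=\Ext^3_\X(\U_m,\U_m)_0=0$ (stability plus Serre duality and $\omega_\X\cong\O_\X$). Two points are thinner than they should be. First, your shift is off by one: the correct complex is $\E=\RP\RHom(\U,\U[2])_0$, whose fiberwise cohomology is $\Ext^1_0,\Ext^2_0$ in degrees $-1,0$; with your $[1]$-shift the amplitude is $[0,1]$ and the symmetry $\E\cong\E^\vee[1]$ does not come out as written. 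Second, a pointwise identification of $\phi$ with Kodaira--Spencer/obstruction maps at closed points does not verify the Behrend--Fantechi axiom; the paper checks the criterion of \cite[Theorem 4.5]{Be-Fa} for arbitrary square-zero extensions $T\subset\overline{T}$, using \cite[Proposition IV.3.1.8]{Illusie}, the identification $\Ext^i_T(Lg^*\E,I)\cong\Ext^{i+1}_{\X\times T}(f^*\U,f^*\U\otimes p^*I)_0$, and the fact that the trace of the obstruction class of a sheaf is the obstruction class of its determinant (this last point is what legitimizes passing to the traceless part when the determinant is fixed). These are fixable slips, not a change of route.

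The genuine gap is in your treatment of $M(\X,P)$. Running ``the analogous construction with the full Atiyah class and $\RHom(U,U)$'' and then descending along the $\G_m$-gerbe cannot work as stated: stable sheaves are simple, so $\Ext^0_\X(F,F)=\mathbb{C}$, and by Serre duality with $\omega_\X\cong\O_\X$ also $\Ext^3_\X(F,F)\cong\mathbb{C}$; hence the full relative endomorphism complex has four nonvanishing cohomology sheaves and is not of perfect amplitude $[-1,0]$ in any shift, and gerbe descent does not remove the offending degrees. (Nor can you substitute the traceless complex here: on $M(\X,P)$ the determinant varies, so when $H^1(\X,\O_\X)\neq 0$ the traceless part fails to capture all deformations.) The paper's proof addresses exactly this: it replaces $\U$ by the universal twisted sheaf $U$ on $M(\X,P)\times\X$ (in the sense of \cite[Section 3.3]{Caldararu-thesis}), observes that the natural candidate $\RP\RHom(U,U[2])\cong(\RP\RHom(U,U))^\vee[-1]$ is \emph{not} perfect, and uses instead the trimmed complex $(\tau^{[1,2]}\RP\RHom(U,U))^\vee[-1]$, invoking the arguments of \cite[Section 4.4]{Thomas-Huy} to show that this truncation still defines an obstruction theory. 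That truncation step --- and the verification that truncating does not destroy the obstruction-theory property --- is the missing idea in your plan, not merely a technical compatibility to be checked.
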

\begin{proof}
 We first treat the case $\M_\L=\M(\X,P,\L)$. Let $$\U\to \XM$$ be the universal stable sheaf over $\XM$. For a closed point $m\in \M$, let $\U_m\to \X$ be the stable sheaf parameterized by $m$. Note that we have the follow equation on trace-less $Ext$ groups $\Ext^\bullet_\X(-,-)_0$:
\begin{equation} \label{equ:Ext0=Ext3=0}
\Ext^3_{\X}(\U_m,\U_m)_0=\Ext^0_{\X}(\U_m,\U_m)_0=0.
\end{equation}
The first equality follows from Serre Duality for DM stacks (see \cite[Theorem 1.32]{Nironi-duality}) and the assumption that $\omega_\X$ is trivial. The second equality is because of the stability of $\U_m$.

The construction of the obstruction theory for $\M_\L$ is similar to the case of the moduli space of sheaves on smooth Calabi-Yau threefolds (see \cite{Thomas-Casson}). In what follows, $\mathbb{L}_{\square}$ denotes the cotangent complex of a DM stack (see \cite{Illusie}).

Let $\pi: \XM\to \M_\L$ be the projection. Composing the {\em Atiyah class} (see \cite[IV 2.3.6.2]{Illusie}) $$\U \to \LXM \OT \U [1]$$ with the natural projection $\LXM\to \pi^*\LM$ gives $$\U \to \pi^*\LM[1]\OT\U.$$ This gives a morphism $$\RHom(\U,\U)\to \pi^*\LM[1].$$ Since $\pi$ is smooth of relative dimension 3, tensoring both sides by $\O[2]$ (note that $\omega_\pi=\pi^*\omega_{\X}\cong\O$) yields a morphism $$\RHom(\U,\U[2]) \to \pi^!\LM.$$ By duality theorem (see \cite[Corollary 1.22 and Theorem 1.32]{Nironi-duality}) this gives a morphism $$\RP \RHom(\U,\U[2])\to \LM$$ which, after restricting the left hand side to its traceless part, gives a morphism $$\phi:\E=\RP \RHom(\U,\U[2])_0\to \LM.$$  In what follows we show that $(\E,\phi)$ is a perfect obstruction theory.

First note that $\E$ is perfect of perfect amplitude contained in $[-1,0]$. This is true because of (\ref{equ:Ext0=Ext3=0}) (see \cite[Lemma 4.2]{Thomas-Huy}).

Next we need to show that $(\E,\phi)$ is an obstruction theory. Suppose  $g:T \to \M$ is a morphism from a scheme $T$ and $T \to \T$ is an extension by a square-zero ideal $I$, then we need to show that the obstruction to extending  $g$ to $\T$ is a class $w\in \Ext^1_{T}(Lg^*\E, I)$ obtained by composing $Lg^*\phi$ with the natural maps $Lg^*\LM\to \LT \to I[1]$.  To show $(\E,\phi)$ is an obstruction theory it suffices to check the following criterion (see \cite[Theorem 4.5]{Be-Fa}):
\begin{claim}
$w=0$ if and only if there exists an extension $\g:\T\to \M$, and if nonempty the set of all such $\g$ makes a torsor over $\Ext^0_{T}(Lg^*\E, I)$.
\end{claim}
We now prove this Claim. Let $f=(id,g)$ and $\f=(id,\g)$. Consider the following diagram:
\begin{equation*}
\begin{CD}
\XT@>f>>\XM@>q>>\X\\
@V{p}VV @V{\pi}VV\\
T@>g>>\M_\L.
\end{CD}
\end{equation*}
By standard arguments one obtains a natural identification
\begin{equation}\label{equ:identification}
\Ext^i_{T}(Lg^*\E, I) \cong \Ext^{i+1}_{\XT}(f^*\U,f^*\U \OT p^*I)_0.
\end{equation}
Because $\M_\L$ is a fine moduli space, deforming $g$ to $\g$ is equivalent to deforming $f^*\U$ to $\f^*\U$. The obstruction to the latter, denoted by $w'$, is obtained by the composition $$f^*\U\to \LXT \OT f^*\U[1] \to p^*I\OT f^*\U[2]$$ and then restricting to the traceless part. The first map is the Atiyah class $at(f^*\U)$ and the second one is induced from the natural map $\LXT\to p^*I[1]$. This is true because of \cite[Proposition IV.3.1.8]{Illusie} and the fact that we have fixed the determinant. The reason for restricting to the traceless part is that line bundles on $\X$ are unobstructed, and as in the case of sheaves on schemes (see \cite{Thomas-Casson}), one can show that the trace of the obstruction class of a sheaf $\F$ on a smooth DM stack is the obstruction class of $\det(\F)$.

For a similar reason and by using (\ref{equ:identification}), if $w'=0$ the set of all deformations is a torsor over $\Ext^0_{T}(Lg^*\E, I).$ So it remains to show that $w$ is mapped to $w'$ under (\ref{equ:identification}). We showed that $\phi:\E \to \LM$ arises from the Atiyah class $at(\U)$. Following exactly the same steps one can show that $Lg^*\E \to Lg^*\LM$ arises from the Atiyah class $at(f^*\U)$. This means that the class $w$, which is the composition
$$Lg^*\E \overset{\phi}{\longrightarrow} Lg^*\LM \to \LT \to I[1],$$
gives rise to
$$f^*\U\to \LXT \OT f^*\U[1] \to p^*I\OT f^*\U[2],$$ which is what we need. This finishes the proof of the Claim, and the construction of the perfect obstruction theory on $\M(\X,P,\L)$.

To construct an obstruction theory on $M=M(\X,P)$, we just make the following modifications to the construction given above. Firstly, $\U$ is replaced by the universal twisted sheaf on $M \times \X$ denoted by $U$ (see \cite[Section 3.3]{Caldararu-thesis}). Secondly, the natural candidate $$\RP \RHom(U,U[2])\cong (\RP\RHom(U,U))^\vee[-1]$$ for the obstruction theory is not perfect, where now $\pi: \X \times M \to M$ is the projection. However, repeating the arguments in \cite[Section 4.4]{Thomas-Huy}, it can be shown that the trimmed complex $$(\tau^{[1,2]}\RP\RHom(U,U))^\vee[-1]$$ gives rise to a perfect obstruction theory on $M(\X,P)$.

The symmetry of the obstruction theories on $\M(\X,P,\L)$ and $M(\X,P)$ follows easily from Serre duality and the Calabi-Yau condition $\omega_{\X}\cong \O_{\X}$.
\end{proof}

If $\X$ is as in Proposition \ref{prop:perfect obstruction theory}, then by the symmetry property of the obstruction theories the expected dimensions of $\M(\X,P,\L)$ and $M(\X,P)$ are 0. By Proposition \ref{prop:perfect obstruction theory} and \cite{Be-Fa} we have
\begin{prop}
Suppose $\X$ is a smooth projective DM stack of dimension 3 satisfying $\omega_{\X}\cong \O_{\X}$. Then $\M(\X,P,\L)$ and $M(\X,P)$ carry virtual 0-cycles denoted by $[\M(\X,P,\L)]^{vir}$ and $[M(\X,P)]^{vir}$.
\end{prop}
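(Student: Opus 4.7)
The plan is essentially to quote the Behrend--Fantechi machinery \cite{Be-Fa} applied to the perfect obstruction theories constructed in Proposition \ref{prop:perfect obstruction theory}, the only substantive point being the verification that the virtual dimension equals zero. Since the statement has been explicitly set up so that all the hard work has been done in the previous proposition, the proof will be very short.

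First, I would record the virtual dimension computation. If $(\E,\phi)$ is a perfect obstruction theory of perfect amplitude in $[-1,0]$, the virtual dimension is $\operatorname{rk}\E = \operatorname{rk}\E^{0}-\operatorname{rk}\E^{-1}$. The symmetry of the obstruction theory in the sense of \cite{beh}, established at the end of the proof of Proposition \ref{prop:perfect obstruction theory}, provides an isomorphism $\E \simeq \E^{\vee}[1]$, which forces $\operatorname{rk}\E^{0} = \operatorname{rk}\E^{-1}$ and hence $\operatorname{rk}\E = 0$. This is valid on each connected component of $\M(\X,P,\L)$ and of $M(\X,P)$.

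Next, I would invoke \cite{Be-Fa}: a Deligne--Mumford stack (or scheme) equipped with a perfect obstruction theory of perfect amplitude in $[-1,0]$ carries a virtual fundamental class in the Chow group of degree equal to the virtual dimension. Applying this to $\M_{\L}=\M(\X,P,\L)$, which is a DM stack by Nironi's construction, and to $M=M(\X,P)$, which is a quasi-projective scheme, together with the perfect obstruction theories of Proposition \ref{prop:perfect obstruction theory}, yields virtual $0$-cycles
\[
[\M(\X,P,\L)]^{vir}\in A_{0}(\M(\X,P,\L)),\qquad [M(\X,P)]^{vir}\in A_{0}(M(\X,P)).
\]

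There is no real obstacle: the only two ingredients needed -- existence of a perfect obstruction theory of the correct amplitude and symmetry of this theory -- have already been supplied by Proposition \ref{prop:perfect obstruction theory}. The mild point worth mentioning explicitly in the write-up is that $\M(\X,P,\L)$ is a fine moduli stack of DM type (as recalled just before Proposition \ref{prop:perfect obstruction theory}), so that the Behrend--Fantechi formalism applies on the stack rather than only on the coarse moduli space.
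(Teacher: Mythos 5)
Your proposal is correct and follows essentially the same route as the paper: the paper likewise deduces that the expected dimensions are $0$ from the symmetry of the obstruction theories of Proposition \ref{prop:perfect obstruction theory} and then cites \cite{Be-Fa} to produce the virtual classes. Your explicit remark that $\E\simeq\E^{\vee}[1]$ forces $\operatorname{rk}\E=0$ just spells out the step the paper leaves implicit.
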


Let $\nu_{\M}$ (respectively, $\nu_{\M_\L}$) be the Behrend's function (\cite{beh,JS}) defined on $\M(\X,P)$ (respectively, $\M(\X,P,\L)$). Then we define the corresponding DT invariants as follows
\begin{defn}
Let $\X$ be as in Proposition \ref{prop:perfect obstruction theory}, $P\in \mathbb{Q}[z]$, and $\L$ a line bundle on $\X$. Then we define the {\em Donaldson-Thomas invariants}\footnote{These invariants depend on the choices of $\gen$ and $\O_X(1)$, however this dependence is suppressed in our notation.} of $\X$ corresponding to $P$ (and $\L$) as the weighted Euler characteristics
$$\DT(\X,P)=-\chi^{na}(\M(\X,P),\nu_\M).$$
$$\DT(\X,P,\L)=\chi(\M(\X,P,\L),\nu_{\M_\L}).$$
Here $\chi^{na}$ is the na\"{i}ve Euler characteristic defined for Artin stacks (see \cite[Definition 2.3]{JS} and $\chi$ denotes the Euler characteristic of DM stacks.

\end{defn}
\begin{remark}
\hfill
\begin{enumerate}
\item Let $r$ be the rank of the objects parameterized by $\M(\X,P)$, and let $\nu_M$ (respectively $\nu_{M_\L}$) be the Behrend's function for $M(\X,P)$ (respectively, for $M(\X,P,\L)$) then by the properties of the Behrend's function and Weighted Euler characteristic (see \cite{beh,JS}), we have
    $$\DT(\X,P)=\chi(M(\X,P),\nu_M),$$ and
    $$\DT(\X,P,\L)=\frac{1}{r}\chi(M(\X,P,\L),\nu_{M_\L}).$$
\item Suppose that there are no strictly semistable sheaves $\F$ on $\X$ satisfying $P_{\F}=P$.
It is known \cite{Nironi-Sheaves} that in this case $M(\X,P)$ and $\M(\X,P,\L)$ are proper, and hence the virtual classes $[M(\X,P)]^{vir}$ and $[\M(\X,P,\L)]^{vir}$ can be integrated. By \cite[Theorem 4.18]{beh} and \cite[Remark 5.14]{JS}
$$\DT(\X,P,\L)=\deg\left([\M(\X,P,\L)]^{vir}\right),$$ and
$$\DT(\X,P)=\deg\left([M(\X,P)]^{vir}\right).$$
\item If there are strictly semistable sheaves, then $\DT(\X,P)$ and $\DT(\X,P,\L)$ are not in general deformation invariant. One way to fix this is to extend the construction of {\em generalized Donaldson-Thomas invariants} \cite{JS} to this setting. This will be explored in a later revision.
\end{enumerate}
\end{remark}

\section{A decomposition result for DT invariants on gerbes}\label{section:DT-gerbe}

The purpose of this Section is to study DT invariants of \'etale gerbes.

\subsection{\'Etale gerbes}\label{subsection:gerbe-basics}
We begin with a review of some basic notions of \'etale gerbes and the construction of their duals. Let $G$ be a finite group\footnote{$G$ is viewed as a finite group scheme over $\text{Spec}\, \mathbb{C}$.}. Let $\X$ be a smooth projective Deligne-Mumford stack with coarse moduli scheme $X$. Let $BG$ denote that stack of $G$-torsors.
\begin{definition}[see e.g. \cite{EHKV}, Definition 3.1]
A $G$-gerbe over $\X$ is a Deligne-Mumford stack $\Y$ together with a morphism $\Y\to \X$ such that there exists a faithfully flat, locally of finite presentation, map $X'\to \X$ such that $\Y\times_\X X'\simeq BG\times X'$.
\end{definition}

In this way one can view $BG$ as a $G$-gerbe over a point.

Let $Out(G)$ denote the group of outer automorphisms of $G$. By definition, $Out(G)$ is the quotient of the group $Aut(G)$ of automorphisms of $G$ by the normal subgroup $Inn(G)$ of inner automorphisms of $G$, $$Out(G)=Aut(G)/Inn(G).$$

Given a $G$-gerbe $\Y\to \X$, there is a naturally defined $Out(G)$-bundle $\overline{\Y}\to \X$, called the {\em band}. See \cite{EHKV}, Definition 3.3 for a detailed definition. We say that the $G$-gerbe $\Y\to \X$ has {\em trivial band} if the $Out(G)$-bundle $\overline{\Y}\to \X$ is endowed with a section (hence is trivialized by this section).

Let $\widehat{G}$ denote the {\em set} of isomorphism classes of irreducible representations of $G$. Note that $\widehat{G}$ is a finite set, the cardinality of $\widehat{G}$ coincides with the number of conjugacy classes of $G$. We may also view $\widehat{G}$ as a disjoint union of points.

Let $\rho: G\to End(V_\rho)$ be an irreducible representation of $G$, and $\phi\in Aut(G)$. Then the composite $$\rho\circ \phi^{-1}: G\to End(V_\rho)$$ is an irreducible representation of $G$. It is easy to see that this induces an action of $Out(G)$ on $\widehat{G}$. Note that the isomorphism class $[1_{tr}]$ of the $1$-dimensional trivial representation of $G$ is fixed by this $Out(G)$ action.

\begin{definition}[see \cite{HHPS}]\label{defn:dual-space}
Define $$\widehat{\Y}:=\overline{\Y}\times_{Out(G)} \widehat{G}.$$
There is a natural map $\widehat{\Y}\to \X$ induced from the map $\overline{\Y}\to \X$.
\end{definition}

\begin{remark}
\hfill
\begin{enumerate}
\item
The morphism $\widehat{\Y}\to \X$ is finite and \'etale. The stack $\widehat{\Y}$ is {\em disconnected}.

\item
If the $G$-gerbe $\widehat{\Y}\to \X$ has trivial band, then $\widehat{\Y}$ is a disjoint union of several copies of $\X$, and the map $\widehat{\Y}\to \X$ restricts to the identity on each copy.

\end{enumerate}
\end{remark}

For each isomorphism class $[\rho]\in \widehat{G}$ we fix a representation $\rho: G\to End(V_\rho)$ in this class. To each $(x, [\rho])\in \widehat{\Y}$ we assign the vector space $V_\rho$. This defines a family of vector spaces over $\widehat{\Y}$, which is in general {\em not} a vector bundle over $\widehat{\Y}$. The obstruction to find a vector bundle over $\widehat{\Y}$ with fiber over $(x, [\rho])$ being $V_\rho$ is a $\mathbb{G}_m$-valued $2$-cocycle on $\widehat{\Y}$, whose {\em inverse} is denoted by $c$.

As observed in \cite{HHPS}, the cocycle $c$ is locally constant, and $c$ represents a {\em torsion} class in the cohomology $H^2_{et}(\widehat{\Y},\mathbb{G}_m)$.

Another way to understand the cocycle $c$ is the following. The failure for $V_\rho$'s to form a vector bundle is due to the fact that they glue {\em up to scalars}. In other words $V_\rho$'s glue to a {\em twisted sheaf} (see e.g. \cite{Caldararu-thesis}). This twisted sheaf is equivalent (see \cite{Lieblich}) to a sheaf on a $\mathbb{G}_m$-gerbe over $\widehat{\Y}$. This $\mathbb{G}_m$-gerbe turns out to be flat and the inverse of its class, which is an element in $H^2_{et}(\widehat{\Y}, \mathbb{G}_m)$, is represented by the $2$-cocycle $c$.

\subsection{Equivalence} \label{subsection:Equivalence}
In what follows we will be concerned with sheaves on gerbes and a decomposition statement about DT invariants for Calabi-Yau gerbes, which is inspired by \cite{HHPS}.

We continue to use the notation in the previous section. Let $\Y\to \X$ be a $G$-gerbe over a smooth projective DM stack $\X$ and let $\widehat{\Y}\to \X$ and $c$ be as constructed before. Note that $\Y$ is also a smooth projective DM stack, and the coarse moduli space of $\Y$ is $X$. Let $c_\Y: \Y\to X$ be the coarsening map. By construction, $\widehat{\Y}$ is also smooth and projective. Let $c_{\widehat{\Y}}: \widehat{\Y}\to \widehat{Y}$ denote its coarsening map, and let $\pi_{\widehat{Y}}: \widehat{Y}\to X$ be the map between coarse moduli spaces induced by $\widehat{\Y}\to \X$.

Fix an ample line bundle $\sO_X(1)$ of $\X$. Note that the pull-back $\pi_{\widehat{Y}}^*\sO_X(1)$ is an ample line bundle of $\widehat{Y}$.

Let $Coh(\Y)$ denote the abelian category of coherent sheaves on $\Y$, and $Coh(\widehat{\Y}, c)$ the abelian category of coherent $c$-{\em twisted} sheaves on $\widehat{\Y}$. We refer to \cite{Caldararu-thesis} and \cite{Lieblich} for detailed discussions on the theory of twisted sheaves. The following result is proven in \cite{Tang-Tseng}.

\begin{theorem}[X. Tang-H.-H. Tseng]
There is natural functor
\begin{equation}\label{equivalence}
F: Coh(\Y)\to Coh(\widehat{\Y}, c),
\end{equation}
which is an equivalence of abelian categories.
\end{theorem}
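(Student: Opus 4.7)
The plan is to construct $F$ \'etale-locally on $\X$ by representation-theoretic isotypic decomposition and then glue, with the cocycle $c$ appearing as the obstruction to the local pieces assembling into an honest sheaf on $\widehat{\Y}$. I would begin by picking an \'etale cover $\{U_i\to \X\}$ on which $\Y$ trivializes as $\Y|_{U_i}\simeq BG\times U_i$; over $BG\times U_i$ a coherent sheaf is the same data as a $G$-equivariant coherent sheaf $\F$ on $U_i$, and the isotypic decomposition gives a canonical splitting $\F\simeq\bigoplus_{[\rho]\in\widehat{G}} V_\rho\otimes_{\mathbb{C}} \F_{[\rho]}$ with $\F_{[\rho]}:=\operatorname{Hom}_G(V_\rho,\F)$ an ordinary (non-equivariant) coherent sheaf on $U_i$. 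The local avatar of $F$ sends $\F$ to the sheaf on $\widehat{G}\times U_i$ whose $[\rho]$-component is $\F_{[\rho]}$; this local construction is clearly exact and fully faithful.

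Next comes the gluing. The gerbe $\Y$ is classified by an $\operatorname{Aut}(G)$-valued \v{C}ech $1$-cocycle whose image in $\operatorname{Out}(G)$ classifies the band $\overline{\Y}$, and the $\operatorname{Out}(G)$-action on $\widehat{G}$ used to build $\widehat{\Y}$ is precisely what is required for the local sheaves on $\widehat{G}\times U_i$ to glue, after descent through the band, to an object over $\widehat{\Y}=\overline{\Y}\times_{\operatorname{Out}(G)}\widehat{G}$. However, on overlaps one must compare two different models for each $V_\rho$, and Schur's lemma forces such comparisons to be well-defined only up to $\mathbb{G}_m$-scalars; the resulting scalar cocycle on triple overlaps is exactly the cocycle $c$ from Section \ref{subsection:gerbe-basics} (defined there as the obstruction to the family $\{V_\rho\}$ assembling into a vector bundle on $\widehat{\Y}$), so the glued object is naturally a $c$-twisted sheaf. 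This yields the functor $F$.

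For the equivalence I would construct a quasi-inverse using the tautological family $\V=\{V_\rho\}$, which by its very construction is a $c^{-1}$-twisted locally free sheaf on $\widehat{\Y}$ carrying a fibrewise $G$-action. Given $\cG\in\operatorname{Coh}(\widehat{\Y},c)$, the tensor product $\V\otimes\cG$ is an untwisted coherent sheaf on $\widehat{\Y}$; pushing it forward along the finite \'etale map $\widehat{\Y}\to \X$ and remembering the $G$-action produces a $G$-equivariant sheaf on $\X$ which I would verify descends canonically to a coherent sheaf on $\Y$. Exactness of both $F$ and its proposed inverse is immediate (isotypic decomposition is exact, $\V$ is locally free as a twisted sheaf, and $\widehat{\Y}\to\X$ is finite \'etale), and the compositions $F\circ F^{-1}$ and $F^{-1}\circ F$ collapse fibrewise to the tautological identity $W\simeq\bigoplus_{[\rho]} V_\rho\otimes \operatorname{Hom}_G(V_\rho,W)$ in $\operatorname{Rep}(G)$.

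The main obstacle will be the careful bookkeeping of the scalar ambiguities at the gluing step, to verify that the $2$-cocycle produced by the local comparisons agrees on the nose with $c$ (rather than $c^{\pm 1}$ or a coboundary-twisted version of it) and is independent of all auxiliary choices: the trivializing \'etale cover, the chosen representatives $V_\rho$ of each class $[\rho]\in\widehat{G}$, and the lifts of the $\operatorname{Out}(G)$-data classifying the band to $\operatorname{Aut}(G)$-data classifying the gerbe. Once this cocycle identification is nailed down, the remaining checks that $F$ is a well-defined functor, that the quasi-inverse is well-defined, and that the two compositions are naturally the identity are all local and routine.
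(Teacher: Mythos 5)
First, a caveat about the comparison: the paper does not actually prove this theorem --- it is quoted from \cite{Tang-Tseng} (listed as ``in preparation''), and the only hint given is that the inverse functor is roughly $(-)\otimes \mathcal{V}_\rho$ with $\mathcal{V}_\rho$ the $c^{-1}$-twisted sheaf with fibers $V_\rho$. Your overall strategy (\'etale-local isotypic decomposition over trivializations $\Y|_{U_i}\simeq BG\times U_i$, gluing into a $c$-twisted sheaf on $\widehat{\Y}$, quasi-inverse by tensoring with the tautological twisted bundle and identifying the result with descent data for $\Y$) is consistent with that sketch and is the expected route.

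There is, however, one genuine flaw in your gluing step. A $G$-gerbe is \emph{not} classified by an $\operatorname{Aut}(G)$-valued \v{C}ech $1$-cocycle: the descent data consists of automorphisms $\phi_{ij}\in \operatorname{Aut}(G)$ together with a $G$-valued $2$-cochain $g_{ijk}$, with the cocycle condition $\phi_{ij}\phi_{jk}=\operatorname{Ad}(g_{ijk})\phi_{ik}$ holding only up to inner automorphisms (only the induced $\operatorname{Out}(G)$-data is a cocycle, and it classifies merely the band $\overline{\Y}$, not $\Y$). Correspondingly, the scalar cocycle $c$ does not arise solely from Schur-lemma ambiguities in choosing intertwiners for $\rho\circ\phi_{ij}$; the elements $\rho(g_{ijk})$ enter the comparison maps as well and are an essential source of $c$. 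The simplest test case shows your mechanism, as literally described, fails: for a gerbe with trivial band (e.g.\ a banded $\mu_n$-gerbe with nontrivial class), all $\phi_{ij}$ may be taken trivial, so your construction would produce $c=1$, whereas the correct $c$ on the component labelled by $[\rho]$ is the generally nontrivial image of the gerbe class under $\rho$, and the equivalence must land in $c$-twisted, not untwisted, sheaves. So the ``bookkeeping'' you defer is not merely a normalization of $c$ up to inverse or coboundary: the gluing data itself must be enriched by the $G$-valued $2$-cochain before the local isotypic pieces can be assembled, and the same point has to be incorporated into your quasi-inverse (globally there is no $G$-action on a pushforward to $\X$; one only gets gerbe descent data, again involving the $g_{ijk}$). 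With that correction the plan matches the intended construction.
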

The construction of this functor $F$ is rather involved. Details can be found in \cite{Tang-Tseng}. Roughly speaking, the inverse functor $Coh(\widehat{\Y}, c) \to Coh(\Y)$ can be understood as taking $(-)\otimes \mathcal{V}_\rho$, where $\mathcal{V}_{\rho}$ is the aforementioned $c^{-1}$-twisted sheaf with fibers $V_\rho$.

As noted above, $\widehat{\Y}$ is disconnected. Let $\widehat{\Y}=\coprod_{i\in \sI} \widehat{\Y}_i$ be the decomposition of $\widehat{\Y}$ into connected components, and let $c_i$ be the $2$-cocycle on $\widehat{\Y}_i$ obtained by restriction of $c$. By definition, we have $$Coh(\widehat{\Y}, c)=\oplus_{i\in \sI}Coh(\widehat{\Y}_i, c_i).$$ Consequently there is a decomposition of $K$-groups $$K(Coh(\widehat{\Y}, c))=\oplus_{i\in \sI} K(Coh(\widehat{\Y}_i, c_i)).$$
On the other hand, $K(\Y)=K(Coh(\Y))=K(Coh(\widehat{\Y}, c))$, therefore we get a decomposition of $K(\Y)$:
\begin{equation}\label{decomp_K_group}
K(\Y)=\oplus_{i\in \sI} K_i, \quad K_i:=K(Coh(\widehat{\Y}_i, c_i)).
\end{equation}

Given $\F\in Coh(\Y)$, we write $$F(\F)=\oplus_{i\in \sI} F(\F)_i,\quad F(\F)_i\in Coh(\widehat{\Y}_i, c_i).$$ Since (\ref{equivalence}) is an equivalence of abelian categories, it preserves exact sequences. Hence for $\F\in Coh(\Y)$ and a subsheaf $\F'\subset \F$, the components $F(\F')_i$ are subsheaves of $F(\F)_i$. Also for $\F_1, \F_2 \in Coh(\Y)$ we have the equality on $Hom$ spaces
\begin{equation}\label{equality_Hom}
Hom_{Coh(\Y)}(\F_1, \F_2)=\oplus_{i\in \sI}Hom_{Coh(\widehat{\Y}_i, c_i)}(F(\F_1)_i, F(\F_2)_i).
\end{equation}
By the construction of the equivalence (\ref{equivalence}), it is easy to check that if $\V$ is a generating sheaf of $\Y$, then $F(\V)$ is a generating $c$-twisted sheaf of $\widehat{\Y}$. Hence $F(\V)_i\in Coh(\widehat{\Y}_i, c_i)$ is a generating $c_i$-twisted sheaf of $\widehat{\Y}_i$. Since $G$ acts trivially on $c_\Y^*\sO_X(1)$, the construction of the equivalence (\ref{equivalence}) implies that $$F(\F\otimes c_\Y^*\sO_X(1)^{\otimes m})=F(\F)\otimes c_{\widehat{\Y}}^*\pi_{\widehat{Y}}^*\sO_X(1)^{\otimes m}.$$

From now on, fix a generating sheaf $\gen$ on $\X$ and an ample line bundle $\sO_X(1)$ on $X$. Also fix the generating $c$-twisted sheaf $F(\gen)$ on $\widehat{\Y}$ and an ample line bundle $\pi_{\widehat{Y}}^*\sO_X(1)$ on $\widehat{Y}$. With these choices it follows that the Hilbert polynomial $P_\F$ of $\F$ coincides with the Hilbert polynomial $P_{F(\F)}$ of $F(\F)$. More precisely,
\begin{equation}\label{eqn_HilbPoly}
P_\F=P_{F(\F)}=\sum_{i\in \sI} P_{F(\F)_i}.
\end{equation}

\subsection{Invariants}\label{sect:invariants}
Let $$C(\Y)=\{[\F]\in K(Y) | 0\neq\F \in Coh(\Y)\}$$ be the positive cone in $K(\Y)$. Then $C(\Y)=\oplus_{i\in \sI} C_i$ corresponding to the decomposition (\ref{decomp_K_group}). Let $k\in C(\Y)$, and let $\M(\Y, k)$ be the moduli stack of stable torsion free sheaves on $\Y$ of class $k$. It is evident that $\M(\Y, k)$ is a component of the moduli stack of stable torsion free sheaves with fixed Hilbert polynomials. Suppose further that $k\in C_i$ in the decomposition above. Note that for any $\F\in Coh(\Y)$ of class $k$, we have $F(\F)=F(\F)_i\in Coh(\widehat{\Y}, c_i)$, namely
\begin{equation}\label{vanishing_component}
F(\F)_j=0\quad \text{for } j\neq i.
\end{equation}
By (\ref{eqn_HilbPoly}) and (\ref{vanishing_component}) we have the following relations between (reduced) Hilbert polynomials:
$$P_\F=P_{F(\F)_i}, \quad p_\F= p_{F(\F)_i}.$$
Consequently $\F$ is (semi)stable if and only if $F(\F)_i$ is (semi)stable.

 Therefore the equivalence (\ref{equivalence}) yields a set-theoretic bijection
$$\M(\Y, k)\to \M((\widehat{\Y}_i, c_i), k), \quad [\F]\mapsto [F(\F)_i].$$ Here $\M((\widehat{\Y}_i, c_i), k)$ denotes the moduli of semistable $c_i$-twisted sheaves on $\widehat{\Y}_i$ of class $k$. As mentioned in Section \ref{sec:intro}, $\M((\widehat{\Y}_i, c_i), k)$ is realized as a connected component of the certain moduli space of stable sheaves on a $\mu_N$-gerbe over $\widehat{\Y}_i$ for $N\gg 0$, and hence Nironi's construction applies (see \cite[Appendix A]{Nironi-Sheaves}).

\begin{proposition}\label{decomp_moduli_of_sheaf}
There is an isomorphism of stacks $$\M(\Y, k)\simeq \M((\widehat{\Y}_i, c_i), k).$$
\end{proposition}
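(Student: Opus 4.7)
The plan is to upgrade the set-theoretic bijection $[\F] \mapsto [F(\F)_i]$ to an isomorphism of moduli stacks by showing that the equivalence $F$ of (\ref{equivalence}) extends to families and is compatible with arbitrary base change. The moduli stacks in question are fine (or more precisely, they are $\G_m$-gerbes over their coarse moduli in the untwisted case, and behave analogously in the twisted case), so it suffices to produce, functorially in any $\mathbb{C}$-scheme $T$, an equivalence between the groupoids of $T$-families of stable objects of class $k$ on the two sides.

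First I would invoke the description from \cite{Tang-Tseng} of the quasi-inverse to $F$: up to the decomposition into isotypic components, it is given by tensoring with the locally free $c^{-1}$-twisted sheaf $\mathcal{V}$ on $\widehat{\Y}$ whose fibers are the vector spaces $V_\rho$. Since tensoring with a locally free sheaf is exact and commutes with arbitrary base change, pulling $\mathcal{V}$ back along $\widehat{\Y} \times T \to \widehat{\Y}$ produces an equivalence
\[
F_T : Coh(\Y \times T) \xrightarrow{\;\sim\;} Coh(\widehat{\Y} \times T, c)
\]
that is natural in $T$ and in particular sends $T$-flat families to $T$-flat families in both directions. The decomposition $\widehat{\Y}=\coprod_i \widehat{\Y}_i$ then yields a further decomposition $F_T(\F) = \bigoplus_i F_T(\F)_i$ compatible with base change.

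Next I would apply the observation already made just before the statement: the Hilbert polynomial and hence the reduced Hilbert polynomial are preserved by $F$ (see (\ref{eqn_HilbPoly})), and $F$ identifies subobjects with subobjects via (\ref{equality_Hom}). Consequently, fiberwise over $T$, $\F$ is stable of class $k \in C_i$ if and only if $F_T(\F)_i$ is stable of class $k$ and $F_T(\F)_j = 0$ for $j\neq i$. Thus $F_T$ restricts to an equivalence between the groupoid of $T$-families parametrized by $\M(\Y,k)$ and the groupoid of $T$-families parametrized by $\M((\widehat{\Y}_i,c_i),k)$. Functoriality in $T$ then produces a morphism $\M(\Y,k) \to \M((\widehat{\Y}_i,c_i),k)$, and running the quasi-inverse of $F$ in families yields a two-sided inverse, giving the claimed isomorphism of stacks.

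The only real obstacle is extending $F$ to families and verifying base-change compatibility; once one has the explicit description of $F$ in terms of tensoring with a locally free twisted sheaf, exactness, preservation of flatness, and compatibility with pullback are all automatic, and everything else follows from the groupoid-level equivalence combined with the already-noted preservation of reduced Hilbert polynomials and subobjects.
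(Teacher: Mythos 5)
Your proposal is correct and is essentially the paper's own (alternative) construction: the paper applies the family version of the equivalence (\ref{equivalence}) to the universal sheaf over $\M(\Y,k)\times \Y$, viewed as a $G$-gerbe over $\M(\Y,k)\times \X$ with dual $\M(\Y,k)\times\widehat{\Y}$ and pulled-back cocycle, which is precisely your functor-of-points argument specialized to $T=\M(\Y,k)$. Your additional remarks on extending $F$ to families, base change, flatness, and preservation of (reduced) Hilbert polynomials and stability match the checks the paper leaves implicit.
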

\begin{proof}
This is proven by checking that deformation theory on both sides agree to all order, using (\ref{equality_Hom}). Alternatively the isomorphism can be constructed as follows. Clearly the product $\M(\Y,k)\times \Y$ is a $G$-gerbe over $\M(\Y, k)\times \X$. By construction we see that the dual of this $G$-gerbe is $$\widehat{\M(\Y,k)\times \Y}=\M(\Y,k)\times \widehat{\Y}.$$ Moreover the $2$-cocycle in this case is the pull-back of $c$ via the projection $\M(\Y,k)\times \widehat{\Y}\to \widehat{\Y}$. Let $\U\to \M(\Y, k)\times \widehat{\Y}$ be the universal stable sheaf. Note that there exists an equivalence (\ref{equivalence}) for any $G$-gerbe. Applying such an equivalence to the sheaf $\U$ over the $G$-gerbe $\M(\Y,k)\times \Y$, we obtain a twisted sheaf $F(\U)$ over $\M(\Y, k)\times \widehat{\Y}$. It is easy to check that $F(\U)$ is a family over $\M(\Y,k)$ of $c_i$-twisted stable sheaves with class $k$. This defines a morphism $\M(\Y,k)\to \M((\widehat{\Y}_i,c_i),k)$. The inverse morphism can be defined in a similar fashion by using an inverse of (\ref{equivalence}).
\end{proof}
Now suppose in addition that $\X$ is a Calabi-Yau Deligne-Mumford stack of dimension $3$. Hence both $\Y$ and $\widehat{\Y}$ are Calabi-Yau of dimension $3$. Then we can define DT invariants $\DT(\Y, k)$ and $\DT((\widehat{\Y}_i, c_i), k_i)$ as in Section \ref{sect:invariants}, and by Proposition \ref{decomp_moduli_of_sheaf} we have
\begin{proposition}
$$\DT(\Y, k)=\DT((\widehat{\Y}_i, c_i), k).$$
\end{proposition}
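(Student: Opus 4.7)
My plan is to read off the equality as an essentially formal consequence of Proposition \ref{decomp_moduli_of_sheaf}. First I would check that both invariants are defined. Since $\X$ is a smooth projective Calabi-Yau DM stack of dimension $3$, the same is true of $\Y$ (as an étale gerbe over $\X$) and of each connected component $\widehat{\Y}_i$ of $\widehat{\Y}$ (since $\widehat{\Y}\to \X$ is finite étale). Moreover the $\mu_N$-gerbe over $\widehat{\Y}_i$ used to realize $\M((\widehat{\Y}_i, c_i), k)$ as a component of a moduli of stable sheaves on a gerbe is itself Calabi-Yau of dimension $3$, since a $\mu_N$-gerbe is étale over its base. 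Thus Proposition \ref{prop:perfect obstruction theory} applies on both sides and furnishes symmetric perfect obstruction theories and hence well-defined DT invariants $\DT(\Y,k)$ and $\DT((\widehat{\Y}_i,c_i),k)$.

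Next I would invoke Proposition \ref{decomp_moduli_of_sheaf}, which provides an isomorphism of DM-stacks $\M(\Y,k) \simeq \M((\widehat{\Y}_i, c_i), k)$. Behrend's function is constructed intrinsically from the local structure of its domain \cite{beh}, and in particular pulls back along any stack isomorphism. Consequently $\nu_{\M(\Y,k)}$ and $\nu_{\M((\widehat{\Y}_i, c_i), k)}$ correspond under this isomorphism, and their weighted Euler characteristics agree. By the definition of DT invariants in Section \ref{sect:invariants} this immediately yields $\DT(\Y,k) = \DT((\widehat{\Y}_i, c_i), k)$.

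A slightly more structural way to see the same thing, which I would mention as a cross-check, is to note that the equivalence $F$ of abelian categories preserves all $\operatorname{Ext}$-groups via \eqref{equality_Hom}, sends traceless Atiyah classes to traceless Atiyah classes, and is compatible with families (this is exactly what makes the proof of Proposition \ref{decomp_moduli_of_sheaf} go through). Hence the two complexes $\RP\RHom(\U,\U[2])_0$ constructed in the proof of Proposition \ref{prop:perfect obstruction theory} on the two moduli stacks agree under the isomorphism, so the symmetric perfect obstruction theories themselves match. This gives equality of virtual classes (whenever they can be integrated), and by Behrend's theorem the same equality of weighted Euler characteristics, which is again the conclusion. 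I expect no real obstacle: once Proposition \ref{decomp_moduli_of_sheaf} is in hand the only thing to double-check is that the Calabi-Yau hypothesis is genuinely inherited by the $\mu_N$-gerbe on the twisted-sheaf side so that Proposition \ref{prop:perfect obstruction theory} applies there verbatim.
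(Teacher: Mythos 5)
Your argument is correct and is essentially the paper's: the paper deduces the equality directly from the stack isomorphism of Proposition \ref{decomp_moduli_of_sheaf} together with the definition of DT invariants as Behrend-weighted Euler characteristics, exactly as you do, and your cross-check via matching perfect obstruction theories is precisely the alternative proof the paper records in the remark following the proposition.
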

\begin{remark}
One can see directly, using (\ref{equality_Hom}), that the $2$-term perfect obstruction theory associated to $\M(\Y, k)$ is mapped to the one on $\M((\widehat{\Y}_i, c_i), k)$. This gives an alternative proof for the proposition above in cases where $k\in C(\Y)$ is such that for sheaves of class $k$ semistability and stability coincide.
\end{remark}

\subsection{Decomposition}
Using the notation in Sections \ref{subsection:gerbe-basics}-\ref{subsection:Equivalence}, let $\Y$ be a $G$-gerbe over a smooth projective DM stack $\X$, and Let $k\in C(\Y)$. Suppose $k_i$ is the $C_i$-component of $k$ in the decomposition $C(\Y)=\oplus_{i\in \sI} C_i$ induced from (\ref{decomp_K_group}), $$k=\sum_i k_i, \quad k_i\in C_i.$$
Let $\M(\Y, k)$ be the moduli of stable sheaves on $\Y$ of class $k$. It is evident that $\M(\Y, k)$ is a component of the moduli of stable sheaves with fixed Hilbert polynomials. We make the following assumption
\begin{assump}\label{semistability_assump}
\hfill
A sheaf $\F\in Coh(\Y)$ of class $k$ is (semi)stable if and only if $F(\F)_i\in Coh(\widehat{\Y}_i, c_i)$ is (semi)stable for all $i\in \sI$.
\end{assump}

\begin{remark}
Naively Assumption \ref{semistability_assump} should follow from the equality (\ref{eqn_HilbPoly}) of Hilbert polynomials.  We are unable to deduce Assumption \ref{semistability_assump} from (\ref{eqn_HilbPoly}) because the notion of (semi)stability is defined using the {\em reduced} Hilbert polynomial, and it is not clear to us whether (\ref{eqn_HilbPoly}) holds for reduced Hilbert polynomials or not.
\end{remark}

In view of Assumption \ref{semistability_assump} the equivalence (\ref{equivalence}) yields a set-theoretic bijection
$$\M(\Y, k)\to \prod_{i\in \sI}\M((\widehat{\Y}_i, c_i), k_i), \quad [\F]\mapsto ([F(\F)_i])_{i\in \sI}.$$ Here $\M((\widehat{\Y}_i, c_i), k_i)$ denotes the moduli of semistable $c_i$-twisted sheaves on $\widehat{\Y}_i$ of class $k_i$.

\begin{proposition}\label{decomp_moduli_of_sheaf2}
Assume Assumption \ref{semistability_assump}, then there is an isomorphism of stacks $$\M(\Y, k)\simeq \prod_{i\in \sI}\M((\widehat{\Y}_i, c_i), k_i).$$
\end{proposition}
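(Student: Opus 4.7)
The plan is to adapt the argument used in Proposition \ref{decomp_moduli_of_sheaf} to the multi-component setting, producing mutually inverse morphisms built from the equivalence $F$ applied to universal families. The overall structure is parallel, but now one must manage several components simultaneously and glue them across the disjoint decomposition $\widehat{\Y}=\coprod_{i\in \sI}\widehat{\Y}_i$.

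First I would construct the forward morphism. Let $\U\to \M(\Y,k)\times \Y$ be the universal stable sheaf. Since $\M(\Y,k)\times \Y$ is a $G$-gerbe over $\M(\Y,k)\times \X$ with dual $\M(\Y,k)\times \widehat{\Y}$ carrying the pullback of the $2$-cocycle $c$, applying the equivalence (\ref{equivalence}) to $\U$ yields a $c$-twisted sheaf $F(\U)$ on $\M(\Y,k)\times \widehat{\Y}$, whose restriction to $\M(\Y,k)\times \widehat{\Y}_i$ is a $c_i$-twisted sheaf $F(\U)_i$. By Assumption \ref{semistability_assump} applied fiberwise and by (\ref{eqn_HilbPoly}) together with (\ref{vanishing_component})-type analysis for each component, each $F(\U)_i$ is a flat family of stable $c_i$-twisted sheaves of class $k_i$ parameterized by $\M(\Y,k)$. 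The resulting morphisms $\M(\Y,k)\to \M((\widehat{\Y}_i,c_i),k_i)$ assemble into a morphism $\Phi:\M(\Y,k)\to \prod_{i\in \sI}\M((\widehat{\Y}_i,c_i),k_i)$.

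For the inverse morphism, set $N:=\prod_{i\in \sI}\M((\widehat{\Y}_i,c_i),k_i)$ and write $p_i:N\to \M((\widehat{\Y}_i,c_i),k_i)$ for the projections. Pulling back the universal $c_i$-twisted sheaves via $p_i\times \mathrm{id}_{\widehat{\Y}_i}$ gives $c_i$-twisted sheaves $\V_i$ on $N\times \widehat{\Y}_i$. Because $\widehat{\Y}$ is the disjoint union of the $\widehat{\Y}_i$ and $c$ restricts to $c_i$ on $\widehat{\Y}_i$, the direct sum $\V:=\bigoplus_{i\in \sI}\V_i$ is naturally a $c$-twisted sheaf on $N\times \widehat{\Y}$. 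Applying the inverse of the equivalence (\ref{equivalence}) for the $G$-gerbe $N\times \Y\to N\times \X$ produces a sheaf $F^{-1}(\V)$ on $N\times \Y$. Using (\ref{eqn_HilbPoly}) we see the Hilbert polynomial at each fiber is the sum of the component polynomials, hence equals $P_k$; invoking Assumption \ref{semistability_assump} in the opposite direction shows $F^{-1}(\V)$ is a family of stable sheaves of class $k$. This defines a morphism $\Psi:N\to \M(\Y,k)$.

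Finally I would verify $\Phi\circ \Psi$ and $\Psi\circ \Phi$ are identities, using the fact that $F$ and $F^{-1}$ are inverse equivalences of abelian categories and hence respect universal families: computing $\Phi\circ \Psi$ on the universal family of $N$ recovers the $\V_i$ component by component, and similarly $\Psi\circ \Phi$ recovers $\U$; the universal property of each moduli stack then forces each composite to be the identity. The step I expect to be the main obstacle is the inverse direction, specifically ensuring that the formal sum $\V=\bigoplus_i \V_i$ really assembles into a single twisted sheaf on $N\times \widehat{\Y}$ to which $F^{-1}$ can be applied in families, and that stability is preserved in families; this is the precise content of Assumption \ref{semistability_assump}, combined with openness of stability, and is why the assumption is needed beyond the naive set-theoretic bijection already provided by the equivalence $F$.
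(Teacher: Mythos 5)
Your proposal is correct and follows essentially the same route as the paper: the paper proves Proposition \ref{decomp_moduli_of_sheaf2} by simply repeating the universal-family argument of Proposition \ref{decomp_moduli_of_sheaf} (applying the equivalence $F$ and its inverse to universal families to get mutually inverse morphisms), which is exactly what you have spelled out component by component. Your extra care with the disjoint-union assembly and with Assumption \ref{semistability_assump} is a reasonable elaboration of details the paper leaves implicit, not a different method.
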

\begin{proof}
This is proved by the same arguments as in the proof of Proposition \ref{decomp_moduli_of_sheaf}
\end{proof}

Now suppose in addition that $\X$ is a Calabi-Yau DM stack of dimension $3$. Then Proposition \ref{decomp_moduli_of_sheaf2} and the multiplicativity of the Behrend's function \cite[Proposition 1.5]{beh} we have the following relation among the DT invariants:
\begin{proposition}\label{prop:DT-decomposition}
Assume Assumption \ref{semistability_assump}, then $$\DT(\Y, k)=\prod_{i\in \sI}\DT((\widehat{\Y}_i, c_i), k_i).$$
\end{proposition}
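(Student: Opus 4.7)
The plan is to reduce everything to Proposition \ref{decomp_moduli_of_sheaf2} and then invoke the multiplicativity of Behrend's function on products. More precisely, recall the definition
$$\DT(\Y,k) = -\chi^{na}\!\left(\M(\Y,k),\,\nu_{\M(\Y,k)}\right),$$
and similarly for each $\DT((\widehat{\Y}_i,c_i),k_i)$. By Proposition \ref{decomp_moduli_of_sheaf2}, under Assumption \ref{semistability_assump}, we have an isomorphism of DM stacks
$$\Phi:\M(\Y,k)\xrightarrow{\;\sim\;}\prod_{i\in\sI}\M((\widehat{\Y}_i,c_i),k_i).$$
So once I know that $\Phi$ pulls back the Behrend function on the right to the Behrend function on the left, the result will follow from the product formula for weighted Euler characteristics.

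The key step, and the one I expect to be the main technical point, is to check that $\Phi$ intertwines the symmetric perfect obstruction theories constructed in Proposition \ref{prop:perfect obstruction theory}. For a closed point $[\F]\in\M(\Y,k)$ corresponding to $([F(\F)_i])_i$ on the right, the equality (\ref{equality_Hom}) (applied to $\F_1=\F_2=\F$ and then to deformations of it) gives a canonical direct sum decomposition of traceless Ext groups
$$\Ext^{\bullet}_{\Y}(\F,\F)_0 \;=\; \bigoplus_{i\in\sI}\Ext^{\bullet}_{\widehat{\Y}_i}(F(\F)_i,F(\F)_i)_0,$$
compatible with Serre duality (since the equivalence $F$ respects the Calabi-Yau structure through the identification of coarse spaces and the pull-back of $\sO_X(1)$). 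This identifies the tangent-obstruction complex of $\M(\Y,k)$ at $[\F]$ with that of the product at $\Phi([\F])$. Upgrading this pointwise identification to a global one requires repeating the Atiyah-class construction in Proposition \ref{prop:perfect obstruction theory} simultaneously on both sides, using the universal sheaf $\U$ on $\M(\Y,k)\times\Y$ and its transform $F(\U)$ on $\M(\Y,k)\times\widehat{\Y}$ as in the proof of Proposition \ref{decomp_moduli_of_sheaf2}; the decomposition of $F(\U)$ into its components over the $\widehat{\Y}_i$ then yields the required isomorphism of obstruction theories.

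With the obstruction theories matched, the symmetry and Behrend's theorem (\cite[Proposition 1.5]{beh}) imply that for a product $Z = Z_1\times\cdots\times Z_n$ of stacks carrying symmetric perfect obstruction theories one has $\nu_Z(z_1,\dots,z_n)=\prod_j \nu_{Z_j}(z_j)$. Applied to $\Phi$, this gives
$$\Phi^{*}\nu_{\prod_i\M((\widehat{\Y}_i,c_i),k_i)} \;=\; \prod_{i\in\sI}\mathrm{pr}_i^{*}\,\nu_{\M((\widehat{\Y}_i,c_i),k_i)} \;=\; \nu_{\M(\Y,k)}.$$

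Finally I would invoke the multiplicativity of the na\"ive weighted Euler characteristic over products, a standard consequence of $\chi^{na}(A\times B,\,f\boxtimes g)=\chi^{na}(A,f)\,\chi^{na}(B,g)$ for constructible functions on finite-type Artin stacks (see \cite[Section 2]{JS}). Combining this with the sign conventions in the definition of $\DT$ (the single overall sign $-$ gets absorbed once one writes each $\DT((\widehat{\Y}_i,c_i),k_i)$ in its own weighted Euler-characteristic form) yields
$$\DT(\Y,k) \;=\; \prod_{i\in\sI}\DT((\widehat{\Y}_i,c_i),k_i),$$
as required. The only delicate point worth highlighting is compatibility of the obstruction theories with Nironi's twisted-sheaf construction for the factors $\M((\widehat{\Y}_i,c_i),k_i)$, but this is handled exactly as in the proof of Proposition \ref{prop:perfect obstruction theory} by replacing $\U$ by the twisted universal sheaf on each factor.
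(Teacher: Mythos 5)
Your proof is correct and follows essentially the same route as the paper: Proposition \ref{decomp_moduli_of_sheaf2} combined with the multiplicativity of the Behrend function \cite[Proposition 1.5]{beh} and the multiplicativity of weighted (na\"ive) Euler characteristics over products. The one place you diverge is that you present, as the main technical point, an identification of the symmetric perfect obstruction theories under the isomorphism $\M(\Y,k)\simeq \prod_{i\in\sI}\M((\widehat{\Y}_i,c_i),k_i)$. For the argument you are actually running this step is superfluous: the Behrend function is intrinsic to the stack --- it is defined from the stack alone, independently of any choice of obstruction theory --- so the isomorphism of Proposition \ref{decomp_moduli_of_sheaf2} matches the Behrend functions automatically, and \cite[Proposition 1.5]{beh} gives $\nu_{Z_1\times Z_2}(z_1,z_2)=\nu_{Z_1}(z_1)\,\nu_{Z_2}(z_2)$ for arbitrary finite-type factors, with no symmetric-obstruction-theory hypothesis. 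The paper's proof is exactly the two-step appeal just described; the comparison of obstruction theories appears there only in the remark following Proposition \ref{prop:DT-decomposition}, as an \emph{alternative} proof (via virtual classes) in the cases where stability and semistability coincide --- which is the situation where your extra paragraph would genuinely earn its keep. Finally, the sign bookkeeping you gesture at (``the overall minus sign gets absorbed'') is handled most cleanly by passing to the coarse-space formulation $\DT=\chi(M,\nu_M)$ from the remark in Section \ref{section:DT-inv_construction}, where multiplicativity of $\chi(-,\nu)$ over products involves no signs at all.
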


This is our decomposition statement for the DT invariants of the gerbe $\Y$.

\begin{remark} By the same discussion as in Section \ref{sect:invariants}, the perfect obstruction theory associated to $\M(\Y, k)$ is mapped to that on $\prod_{i\in \sI}\M((\widehat{\Y}_i, c_i), k_i)$. This observation gives an alternative proof of Proposition \ref{prop:DT-decomposition}  in the cases semistability and stability coincide for all sheaves of class $k$, and for all $c_i$-twisted sheaves of class $k_i$, $i\in \sI$. \end{remark}


\begin{thebibliography}{10}

\bibitem{agv1}
D.~Abramovich, T.~Graber, and A.~Vistoli.
\newblock Algebraic orbifold quantum products.
\newblock In: Adem, A; Morava, J; Ruan, Y. Orbifolds in mathematics and physics
(Madison, WI, 2001), 1--24, Contemp. Math., 310, Amer. Math. Soc.

\bibitem{agv2}
D.~Abramovich, T.~Graber, and A.~Vistoli.
\newblock Gromov-Witten theory of Deligne-Mumford stacks.
\newblock {\em Amer. J. Math.}, 130 (2008), no. 5, 1337--1398.

\bibitem{beh}
K.~Behrend.
\newblock Donaldson-Thomas type invariants via microlocal geometry.
\newblock {\em Ann. Math.} (2), 170 (2009), No. 3, 1307--1338.

\bibitem{Be-Fa}
K.~Behrend and B.~Fantechi.
\newblock The intrinsic normal cone.
\newblock {\em Invent. Math.}, 128(1):45--88, 1997.

\bibitem{Caldararu-thesis}
A.~Caldararu.
\newblock {\em Derived categories of twisted sheaves on Calabi-Yau manifolds}.
\newblock Ph.D thesis, Cornell University, 2000.

\bibitem{cr}
W.~Chen and Y.~Ruan.
\newblock Orbifold Gromov-Witten theory.
\newblock In: Adem, A; Morava, J; Ruan, Y. Orbifolds in mathematics and physics
(Madison, WI, 2001), 25--85, Contemp. Math., 310, Amer. Math. Soc.


\bibitem{EHKV}
D.~Edidin, B.~Hassett, A.~Kresch, and A.~Vistoli.
\newblock Brauer groups and quotient stacks.
\newblock {\em Amer. J. Math.}, 123(4):761--777, 2001.

\bibitem{HHPS}
S.~Hellerman, A.~Henriques, T.~Pantev, E.~Sharpe, and M.~Ando.
\newblock Cluster decomposition, {$T$}-duality, and gerby {CFT}s.
\newblock {\em Adv. Theor. Math. Phys.}, 11(5):751--818, 2007.

\bibitem{Illusie}
L.~Illusie.
\newblock {\em Complexe cotangent et d\'eformations. {I}}.
\newblock Lecture Notes in Mathematics, Vol. 239. Springer-Verlag, Berlin,
  1971.

\bibitem{JS}
D.~Joyce and Y.~Song.
\newblock A theory of generalized Donaldson-Thomas invariants.
\newblock arXiv:0810.5645v4.

\bibitem{Kresch}
A.~Kresch.
\newblock On the geometry of Deligne-Mumford stacks.
\newblock In: Abramovich, D; Bertram, A; Katzarkov, L; Pandharipande, R;
  Thaddeus, M. Algebraic Geometry: Seattle 2005. Providence, Rhode Island,
  259-271.

\bibitem{Lieblich}
M.~Lieblich.
\newblock Moduli of twisted sheaves.
\newblock {\em Duke Math. J.}, 138(1):23--118, 2007.

\bibitem{mnop1}
D.~Maulik, N.~Nekrasov, A.~Okounkov, and R.~Pandharipande.
\newblock Gromov-{W}itten theory and {D}onaldson-{T}homas theory. {I}.
\newblock {\em Compos. Math.}, 142(5):1263--1285, 2006.
\newblock arXiv:math.AG/0312059.

\bibitem{mnop2}
D.~Maulik, N.~Nekrasov, A.~Okounkov, and R.~Pandharipande.
\newblock Gromov-{W}itten theory and {D}onaldson-{T}homas theory. {II}.
\newblock {\em Compos. Math.}, 142(5):1286--1304, 2006.
\newblock arXiv:math.AG/0406092.

\bibitem{Nironi-duality}
F.~Nironi.
\newblock {Grothendieck Duality for Projective Deligne-Mumford Stacks}.
\newblock arXiv:0811.1955.

\bibitem{Nironi-Sheaves}
F.~Nironi.
\newblock {Moduli Spaces of Semistable Sheaves on Projective Deligne-Mumford
  Stacks}.
\newblock arXiv:0811.1949v1.

\bibitem{Olsson-Starr}
M.~Olsson and J.~Starr.
\newblock Quot functors for {D}eligne-{M}umford stacks.
\newblock {\em Comm. Algebra}, 31(8):4069--4096, 2003.

\bibitem{Tang-Tseng}
X.~Tang and H.-H.~Tseng.
\newblock In preparation.

\bibitem{Thomas-Casson}
R.~P. Thomas.
\newblock A holomorphic {C}asson invariant for {C}alabi-{Y}au 3-folds, and
  bundles on {$K3$} fibrations.
\newblock {\em J. Differential Geom.}, 54(2):367--438, 2000.
\newblock arXiv:math/9806111.

\bibitem{Thomas-Huy}
R.~P.~Thomas D.~Huybrechts.
\newblock {Deformation-obstruction theory for complexes via Atiyah and
  Kodaira--Spencer classes}.
\newblock to appear in {\em Math. Ann.}
\newblock arXiv:0805.3527.

\end{thebibliography}

\end{document}